\newtheorem{theorem}{Theorem}
\newtheorem{lemma}[theorem]{Lemma}
\newtheorem{proposition}[theorem]{Proposition}
\newtheorem{remark}[theorem]{Remark}
\newenvironment{proof}[1][Proof]{\noindent\textbf{#1.} }{\ \rule{0.5em}{0.5em}}
\author{Ioannis Dimitriou \footnote{idimit@uoi.gr}\footnote{Corresponding author.}}
\affil{\small Department of Mathematics, 
	University of Ioannina, 
	45110, Ioannina, Greece.}
\begin{document}
\title{On the overlap times in queues with dependence under a Farlie-Gumbel-Morgenstern copula}

\maketitle
\begin{abstract}
  In this work, we analyze the steady-state maximum overlap time distribution in a single-server queue by introducing a dependence structure between service and interarrival times under the Farlie-Gumber-Morgenstern copula. We provide explicit expressions by indicating the effect of dependence. We also focus on the steady-state distribution of the minimum overlap time of a customer with its two adjacent customers. A more general dependence structure is also investigated. A numerical example illustrates the effect of dependence on the maximum/minimum overlap times.
    \end{abstract}
    \vspace{2mm}
	
	\noindent
	\textbf{Keywords}: {Queues, maximum overlap time, minimum overlap time, Farlie-Gumbel-Morgenstern copula}

\section{Introduction}
\label{sec10}
Quite recently, the concept of overlap time has become an important metric to describe the complex dynamics of how customers interact in service systems. Since its introduction in the context of COVID-19 and infectious
disease spread \cite{palomo21,kang}, several other works are considered to
explore overlap times in more complicated queueing models and has inspired a large
number of recent papers on the topic, e.g., \cite{boxpen,boxma1,boxma2,boxma3,palomo}. Our focus is on deriving the LST of the steady-state distribution of the maximum and the minimum overlap time in singe-server queues, in which the interarrival and the service times are dependent. The maximum overlap time, stands for the longest amount of
time a customer spends with any other customer in the system, and it quantifies the longest duration during which an individual interacts with another customer within a service system. The minimum adjacent overlap time measures the least amount of time
a customer is going to spend with its two adjacent customers in a service system. Both are important for understanding the spread of infectious diseases. 

The paper in \cite{palomo} studied for the first time the maximum overlap time for a $M/M/1$ queue, while quite recently, the authors in \cite{boxpen} generalized the work in \cite{palomo} to the $G/G/1$ case. All the above mentioned works adopt the assumption of independence among interarrival and service times. It is readily seen that the relation among these two important elements of a queueing system heavily affects the overlap times. In this work, we attempt for the first time to investigate this relation by assuming that the interarrival and service times are no longer independent, but instead, they are dependent based on a class of copulas, namely the Farlie-Gumbel-Morgerstern (FGM) copula. We choose this class of copulas due to its special characteristics, which allow for explicit results.

\subsection{Our contribution}
In this paper, we make the following contributions to the literature:
\begin{itemize}
    \item We derive for the first time the LST of the distribution of the maximum and the minimum overlap time in queues under dependence structures. We consider the $M/G/1$ and the $E(n,\lambda)/G/1$ queues ($E(n,\lambda)$ denotes the $Erlang(n,\lambda)$ interarrival distribution). Contrary to the existing works that assume independence among interarrival and service times, e.g., \cite{boxpen,boxma1,boxma2,boxma3,palomo}, we consider the case where interarrival and service times are dependent based on the FGM copula. To our best knowledge, this is also the first time that the LST of the waiting time distribution, an important result in investigating overlap times, is derived for these models under the dependence based on the FGM copula. In queueing literature, there are very few results that use copulas to measure the dependence between random variables, and thus, this work tries also to fill this gap.
    \item We also considered a more general dependence framework where interarrival times are randomly proportionally dependent to service times, and on top of that the FGM copula plays an important role. 
    \item Numerical results reveal the effect of the dependence on the maximum and minimum overlap time.
\end{itemize}

Let us first briefly introduce the concept of copulas and especially FGM. A bivariate copula $C$ is a joint distribution function on $[0,1]\times[0,1]$ with standard uniform marginal  distributions. Under Sklar's theorem \cite{nelsen}, any bivariate distribution function $F$ with marginals $F_1$ and $F_2$ can be written as $F(x,y)=C(F_{1}(x),F_{2}(y))$, for some copula $C$. For more details on copulas and their properties; see \cite{joe,nelsen}.

Modeling the dependence structure between random variables (r.v.) using copulas has become popular in actuarial science and financial risk management; e.g. \cite{denoi,albteu,cossette1} (non-exhaustive list). In queueing literature there has been some recent works where copulas are used to introduce dependency. In \cite{trapa}, the authors applied a new constructed bound copula to analyze the dependency between two
parallel service times. In \cite{lei}, the authors developed a new method based on copulas to model correlated inputs in discrete-event stochastic systems. Motivated by recent empirical evidence, the authors in \cite{wu1,wu2} used a fluid model with bivariate dependence orders and copulas to study the dependence among service times and patience times in large-scale service systems; see also \cite{gu,wang,rai}. 

In our work we focus on a dependence structure based on the FGM copula, which is defined for $\theta\in[-1,1]$ by
\begin{displaymath}
    C_{\theta}^{FGM}(u_{1},u_{2})=u_{1}u_{2}+\theta u_{1}u_{2}(1-u_{1})(1-u_{2}),\,(u_{1},u_{2})\in[0,1]\times[0,1].
\end{displaymath}
The density function associated to the above expression is
\begin{displaymath}
    c_{\theta}^{FGM}(u_{1},u_{2}):=\frac{\partial^{2}}{\partial u_{1}\partial u_{2}}C_{\theta}^{FGM}(u_{1},u_{2})=1+\theta(1-2u_{1})(1-2u_{2}).
\end{displaymath}

The FGM copula allows for negative and positive dependence and includes the independence copula ($\theta=0$). Our primary motivation for choosing FGM copula is due to its simplicity and its tractability due to its polynomial structure. It has a simple analytical form, which is easy to apply, thus, make it attractive when we are dealing with the modeling of bivariate dependent data. Its simple analytic shape enables closed-form solutions to many problems in applied probability. Moreover, it is a first order approximation of the Plackett copula and of the Frank copula \cite[p. 100, and p. 106, respectively]{nelsen}. For applications of FGM copula in risk theory, health insurance plans, financial risk management, stochastic frontiers and a stereological context, see \cite{cossette1}, \cite{mai}, \cite{barges}, \cite{kim} and the references therein. Often, it is natural to describe the FGM copula as a perturbation of the product copula; e.g. \cite{durante}, inducing moderate dependence between marginals. The class of FGM copulas is a popular choice when working in two dimensions due to its simple shape and the exact calculus of polynomial functions.

In general, the bivariate distribution, say $F_{X,Y}$, of the bivariate random vector $(X,Y)$ with continuous marginals, say $F_{X}$, $F_{Y}$, and with a dependence structure based on the FGM copula is defined for $x,y\in\mathbb{R}^{+}$, $\theta\in[-1,1]$ by
\begin{displaymath}
\begin{array}{rl}
   F_{X,Y}(x,y)=&C_{\theta}^{FGM}(F_{X}(x),F_{Y}(y))\vspace{2mm}\\
     =& F_{X}(x)F_{Y}(y)+\theta F_{X}(x)F_{Y}(y)\left(1-F_{X}(x)\right)\left(1-F_{Y}(y)\right).
\end{array}
\end{displaymath}
 With this, the bivariate density of $(X,Y)$ is given by,
\begin{displaymath}
\begin{array}{rl}
   f_{X,Y}(x,y)=&c_{\theta}^{FGM}(F_{X}(x),F_{Y}(y))f_{X}(x)f_{Y}(y)\vspace{2mm}\\
     =& f_{X}(x)f_{Y}(y)+\theta g(x)f_{Y}(y)\left(2\bar{F}_{Y}(y)-1\right),
\end{array}
\end{displaymath}
where $g(x)=f_{X}(x)(1-2F_{X}(x))$ with Laplace transform $g^{*}(s)=\int_{0}^{\infty}e^{-sx}g(x)dx$, and $\bar{F}_{Y}(y)=1-F_{Y}(y)$.

The rest of the paper is organised as follows. In Section \ref{sec1} we provide the LST of the distribution of the maximum overlap time for the $M/G/1$ and $E(n,\lambda)/G/1$ in the presence of dependence based on the FGM copula. The minimum overlap time for those models is investigated in Section \ref{sec2}, while in Section \ref{sec3} we investigate the case of a more general dependence framework under which interarrival times are randomly proportional to service times, and on top of that, FGM copula plays an important role. Finally, in Section \ref{sec4}, we numerically illustrate our theoretical findings by focusing on the effect of dependence on the mean maximum and minimum overlap time.
\section{The maximum overlap time}\label{sec1}
In this section, we derive our main results for the maximum overlap time for a single-server queue in the presence of dependence between interarrival and service times based on the FGM copula. In doing that, we follow the lines in \cite{boxpen}. For completeness, we state the following lemma.
\begin{lemma} (Lemma 2.2 in \cite{boxpen})
The maximum overlap time has the following decomposition into two
independent terms, viz., the waiting time and max-plus difference of a service time
and inter-arrival time:
\begin{equation}
    M_n = W_n + [S_n - A_n]^{+}. \label{bas}
\end{equation}
\end{lemma}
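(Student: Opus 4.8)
The plan is to establish \eqref{bas} first as an almost-sure identity of random variables (a statement about sample paths, valid whatever the joint law of the service and interarrival times) and then to read off the claimed independence from the probabilistic structure of the input. Label customers by $k$, let $T_k$ be the arrival epoch of customer $k$, let $D_k=T_k+W_k+S_k$ be its departure epoch, and let $A_k=T_{k+1}-T_k$. Customer $k$ is present in the system precisely on the interval $[T_k,D_k]$, so the overlap of customers $n$ and $k$ is the length $\bigl|[T_n,D_n]\cap[T_k,D_k]\bigr|$ and $M_n=\max_{k\neq n}\bigl|[T_n,D_n]\cap[T_k,D_k]\bigr|$. The one structural input I would use is that the discipline is FIFO, so that both sequences $(T_k)_k$ and $(D_k)_k$ are nondecreasing.

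Next I would split the maximum over $k\neq n$ into the contributions of earlier and of later customers. For $k<n$ one has $T_k\le T_n$ and $D_k\le D_n$, whence $[T_n,D_n]\cap[T_k,D_k]=[T_n,D_k]$ and the overlap equals $[D_k-T_n]^{+}$; this is nondecreasing in $k$, so its maximum over $k<n$ is attained at $k=n-1$ and equals $[D_{n-1}-T_n]^{+}$. Symmetrically, for $k>n$ the overlap equals $[D_n-T_k]^{+}$, which is nonincreasing in $k$, so its maximum over $k>n$ is $[D_n-T_{n+1}]^{+}$. Hence $M_n=\max\bigl\{[D_{n-1}-T_n]^{+},\,[D_n-T_{n+1}]^{+}\bigr\}$. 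By Lindley's recursion a customer begins service at $\max\{T_n,D_{n-1}\}$, so $W_n=[D_{n-1}-T_n]^{+}$; and by definition $[D_n-T_{n+1}]^{+}=[W_n+S_n-A_n]^{+}$. Therefore $M_n=\max\bigl\{W_n,\,[W_n+S_n-A_n]^{+}\bigr\}$, and a short case analysis on the sign of $S_n-A_n$ (if $S_n\ge A_n$ both quantities equal $W_n+S_n-A_n$; if $S_n<A_n$ then $[W_n+S_n-A_n]^{+}\le W_n$ and both equal $W_n$) collapses this to $M_n=W_n+[S_n-A_n]^{+}$, which is \eqref{bas}.

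For the independence of the two summands, unrolling Lindley's recursion exhibits $W_n$ as a measurable function of the service and interarrival times of customers $1,\dots,n-1$ only, while $[S_n-A_n]^{+}$ is a function of the single pair $(S_n,A_n)$ attached to customer $n$. In the present model the successive (service time, interarrival time) pairs are independent across customers — the FGM copula couples only the two coordinates within a pair — so $(S_n,A_n)$ is independent of the history that determines $W_n$, and the two terms in \eqref{bas} are independent, as asserted.

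The step I expect to be the main obstacle is the splitting argument: one must handle the $[\,\cdot\,]^{+}$ truncations correctly (a customer who has already departed, or who arrives only after customer $n$ leaves, contributes zero overlap) and invoke FIFO monotonicity at precisely the right moment to reduce the two inner maxima to the immediate neighbours $n-1$ and $n+1$. Once that reduction is in place, the remainder is the elementary identity $\max\{w,[w+x]^{+}\}=w+[x]^{+}$ for $w\ge0$ together with the measurability remark above.
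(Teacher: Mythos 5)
Your argument is correct. Note that the paper does not actually prove this lemma --- it imports it verbatim from Lemma~2.2 of \cite{boxpen} ``for completeness'' --- so there is no in-paper proof to compare against; your write-up is a sound reconstruction of the standard argument: FIFO monotonicity of the arrival and departure sequences reduces the maximum over all $k\neq n$ to the two adjacent customers, giving $M_n=\max\{W_n,[W_n+S_n-A_n]^{+}\}$, and the elementary identity $\max\{w,[w+x]^{+}\}=w+[x]^{+}$ for $w\ge 0$ yields \eqref{bas}. Your treatment of the independence claim is also the right one for this paper's setting, and it is worth having spelled out: the FGM copula couples only $S_n$ with $A_n$ within a pair, while the pairs $(S_n,A_n)$ remain i.i.d.\ across $n$, so $W_n$ (a function of $(S_k,A_k)_{k<n}$ via Lindley's recursion) is independent of $[S_n-A_n]^{+}$; this is precisely the property the paper relies on in equation \eqref{bas1}.
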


We first consider the simplest case that corresponds to the $M/G/1$ queue, i.e., we assume that $A\sim exp(\lambda)$. Then,
\begin{equation}
         f_{S,A}(y,x)=f_{S}(y)\lambda e^{-\lambda x}+\theta g(y)\left[2\lambda e^{-2\lambda x}-\lambda e^{-\lambda x}\right].
  \label{biv1}   
\end{equation}
The following theorem provides the LST of distribution of the maximum overlap time for the $M/G/1$ queue under a dependence structure based on FGM copula.
\begin{theorem}\label{th1}
    The LST of the steady-state maximum overlap time, say $m^{+}(s):=E(e^{-sM_{\infty}})$, for the M/G/1 queue with dependence structure under the FGM copula is equal to
    \begin{equation}
        \begin{array}{rl}
             m^{+}(s)=&\left( \frac{s\left[ (2\lambda-s)(\theta g^{*}(\lambda)-\phi_{S}(\lambda))\omega^{*}(\lambda)-\theta(\lambda-s)g^{*}(2\lambda)\omega^{*}(2\lambda)\right]}{(2\lambda-s)(\lambda-s)-\lambda(2\lambda-s)\phi_{S}(s)+\lambda\theta sg^{*}(s)}\right )  \vspace{2mm}\\
             & \times \left( \frac{\lambda}{\lambda-s}\phi_{S}(s)-\frac{s}{\lambda-s}\phi_{S}(\lambda)+s\theta\left[ \frac{g^{*}(\lambda)}{\lambda-s}- \frac{g^{*}(2\lambda)}{2\lambda-s} - \frac{\lambda}{(\lambda-s)(2\lambda-s)}g^{*}(s)\right] \right),
        \end{array}
    \end{equation}
    where
    \begin{equation}
        \begin{array}{rl}
            \omega^{*}(\lambda)= &\frac{2(1-\rho)(\lambda-\tau_{1})}{\tau_{1}(\theta g^{*}(\lambda)-\phi_{S}(\lambda))},\vspace{2mm}  \\
             \omega^{*}(2\lambda)= &\frac{2(1-\rho)(2\lambda-\tau_{1})}{\tau_{1}\theta g^{*}(2\lambda)},
        \end{array}\label{cvq}
    \end{equation}
    and $\tau_{1}$ is the only zero of the equation
    \begin{equation}
        1=\frac{\lambda}{\lambda-s}\phi_{S}(s)-\theta\frac{s\lambda}{(\lambda-s)(2\lambda-s)}g^{*}(s),\label{rou}
    \end{equation}
    such that $Re(\tau_{1})>0$.    Moreover,
    \begin{equation}
        E(M_{\infty})=\frac{\lambda E(S^{2})}{2(1-\rho)}+E(S)-\frac{1-\phi_{S}(\lambda)}{\lambda}+\theta G,\label{exp}
    \end{equation}
\end{theorem}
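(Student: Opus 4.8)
I would prove Theorem~\ref{th1} by exploiting the product form implied by the decomposition \eqref{bas}: since $W_{\infty}$ and $[S-A]^{+}$ are independent, $m^{+}(s)=w^{*}(s)\,E(e^{-s[S-A]^{+}})$, where $w^{*}(s):=E(e^{-sW_{\infty}})$ is the stationary waiting-time LST. It then suffices to identify the first displayed factor with $w^{*}(s)$ and the second with $E(e^{-s[S-A]^{+}})$, the latter computed under the bivariate density \eqref{biv1}.

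I would dispatch the second factor first, as it is the routine part. Writing $E(e^{-s[S-A]^{+}})=P(S\le A)+E\big(e^{-s(S-A)}\mathbf 1\{S>A\}\big)$ and integrating \eqref{biv1} termwise, the term $f_{S}(y)\lambda e^{-\lambda x}$ produces the classical contributions $\tfrac{\lambda}{\lambda-s}\phi_{S}(s)-\tfrac{s}{\lambda-s}\phi_{S}(\lambda)$, while the term $\theta g(y)\big[2\lambda e^{-2\lambda x}-\lambda e^{-\lambda x}\big]$ produces $s\theta\big[\tfrac{g^{*}(\lambda)}{\lambda-s}-\tfrac{g^{*}(2\lambda)}{2\lambda-s}-\tfrac{\lambda}{(\lambda-s)(2\lambda-s)}g^{*}(s)\big]$; all the one-dimensional integrals are elementary and use only $\int_{0}^{\infty}e^{-sy}g(y)\,dy=g^{*}(s)$. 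Adding the two reproduces the second factor (and, since $g^{*}(0)=0$, it equals $1$ at $s=0$, as it must).

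For $w^{*}(s)$ I would start from Lindley's recursion $W_{n+1}=[W_{n}+S_{n}-A_{n+1}]^{+}$. The FGM dependence couples a service time only with the interarrival time that follows it, whose joint density is \eqref{biv1}, whereas $W_{n}$ is a functional of the strict past; hence $W_{n}$ is independent of $U_{n}:=S_{n}-A_{n+1}$, so in steady state $W_{\infty}\stackrel{d}{=}[W_{\infty}+U]^{+}$ with $W_{\infty}\perp U$. Taking LSTs yields the Lindley-type identity $w^{*}(s)\big(1-\kappa(s)\big)=R(s)$, where $\kappa(s)$ is the right-hand side of \eqref{rou} — one checks $\kappa(s)=E(e^{-sU})$ directly from \eqref{biv1} — and $R(s)=P(W_{\infty}+U\le 0)-E\big(e^{-s(W_{\infty}+U)}\mathbf 1\{W_{\infty}+U\le 0\}\big)$. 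Because $\{W_{\infty}+U\le 0\}=\{A\ge W_{\infty}+S\}$ and the interarrival marginal appears in \eqref{biv1} only through $e^{-\lambda x}$ and $e^{-2\lambda x}$, integrating against \eqref{biv1} collapses $R(s)$ to a linear combination of the two unknowns $w^{*}(\lambda)$ and $w^{*}(2\lambda)$:
\[
R(s)=s\left[\frac{\big(\theta g^{*}(\lambda)-\phi_{S}(\lambda)\big)\,w^{*}(\lambda)}{\lambda-s}-\frac{\theta g^{*}(2\lambda)\,w^{*}(2\lambda)}{2\lambda-s}\right].
\]
Dividing by $1-\kappa(s)$ and clearing the denominators $(\lambda-s)(2\lambda-s)$ yields exactly the first displayed factor, with $\omega^{*}(\lambda),\omega^{*}(2\lambda)$ standing for $w^{*}(\lambda),w^{*}(2\lambda)$. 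To determine these I would argue that $(\lambda-s)(2\lambda-s)\big(1-\kappa(s)\big)$ is analytic on $\{\mathrm{Re}\,s\ge 0\}$ and — by Rouché's theorem on a large right half-disk, using $\rho<1$ — has there only the zeros $s=0$, cancelled by the factor $s$ in the numerator, and $s=\tau_{1}$, the unique root of \eqref{rou} with $\mathrm{Re}(\tau_{1})>0$. Analyticity of the LST $w^{*}$ at $\tau_{1}$ forces the numerator to vanish there, giving $(2\lambda-\tau_{1})\big(\theta g^{*}(\lambda)-\phi_{S}(\lambda)\big)w^{*}(\lambda)=(\lambda-\tau_{1})\theta g^{*}(2\lambda)w^{*}(2\lambda)$, and the normalization $w^{*}(0)=1$ (a L'Hôpital evaluation using $\phi_{S}(s)=1-sE(S)+o(s)$ and $g^{*}(0)=0$) gives $2\big(\theta g^{*}(\lambda)-\phi_{S}(\lambda)\big)w^{*}(\lambda)-\theta g^{*}(2\lambda)w^{*}(2\lambda)=-2(1-\rho)$. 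Solving this $2\times2$ linear system produces \eqref{cvq}, completing the formula for $m^{+}(s)$.

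Finally, for \eqref{exp} I would use $E(M_{\infty})=E(W_{\infty})+E([S-A]^{+})$. The second mean equals $\int_{0}^{\infty}\!\!\int_{0}^{y}(y-x)f_{S,A}(y,x)\,dx\,dy=E(S)-\tfrac{1-\phi_{S}(\lambda)}{\lambda}$ plus a $\theta$-correction (equivalently, differentiate the second factor at $0$). The first mean comes from the second-order Taylor expansion of $w^{*}(s)(1-\kappa(s))=R(s)$ at $s=0$: with $\kappa'(0)=-E(U)$, $\kappa''(0)=E(U^{2})$, and $E(U^{2})=E(S^{2})-2E(SA)+2/\lambda^{2}$ where $E(SA)$ is read off \eqref{biv1}, one obtains $E(W_{\infty})=\tfrac{\lambda}{2(1-\rho)}\big(R''(0)+E(U^{2})\big)$, whose $\theta$-independent part reduces to $\tfrac{\lambda E(S^{2})}{2(1-\rho)}$ (the remaining terms cancel, which is checked most easily on the specialization $\theta=0$, where $\tau_{1}\to 2\lambda$). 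Collecting all $\theta$-dependent pieces into $\theta G$ gives \eqref{exp}. I expect the main obstacle to be the third step: carrying out the Lindley/Wiener–Hopf bookkeeping under the copula dependence — in particular recognising that precisely the two constants $w^{*}(\lambda),w^{*}(2\lambda)$ enter, owing to the $e^{-\lambda x}$ and $e^{-2\lambda x}$ pieces of \eqref{biv1} — and justifying the Rouché count that isolates the single boundary root $\tau_{1}$.
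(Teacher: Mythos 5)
Your proposal is correct and follows essentially the same route as the paper: the product decomposition $m^{+}(s)=w^{*}(s)E(e^{-s[S-A]^{+}})$, direct integration of the FGM density for the second factor, a Lindley-type functional equation reducing $w^{*}(s)$ to the two unknowns $w^{*}(\lambda),w^{*}(2\lambda)$, and their determination via $w^{*}(0)=1$ together with a Rouch\'e argument isolating the single root $\tau_{1}$ in the right half-plane. The only cosmetic difference is that you phrase the waiting-time step in Wiener--Hopf form $w^{*}(s)(1-\kappa(s))=R(s)$ with $\kappa(s)=E(e^{-sU})$, whereas the paper expands $E(e^{-s[W+S-A]^{+}})$ by brute-force integration; the content is identical.
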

where for $\rho=\lambda E(S)<1$,
\begin{displaymath}
    \begin{array}{rl}
         G=&\frac{g^{*}(2\lambda)\omega^{*}(2\lambda)(1-\rho-\lambda\theta g^{\prime}(0)+\lambda^{2}E(S^{2}))+2\lambda\omega(\lambda)(g^{*}(\lambda)(\theta g^{*\prime}(0)+\lambda E(S^{2}))-\theta g^{*\prime}(0)\phi_{S}(\lambda))}{4\lambda(1-\rho)^{2}}+\frac{g^{*}(2\lambda)-2g^{*}(\lambda)}{2\lambda}. 
    \end{array}
\end{displaymath}
\begin{proof}
By exploiting the independence of $W_{n}$ and $[S_{n}-A_{n}]^{+}$ in \eqref{bas},
\begin{equation}
    E(e^{-s M_{\infty}})=E(e^{-sW_{\infty}})E(e^{-s[S-A]^{+}})=w^{*}(s)E(e^{-s[S-A]^{+}}).\label{bas1}
\end{equation}
Then,
\begin{equation}
    \begin{array}{rl}
        E(e^{-s[S-A]^{+}}) =&\int_{y=0}^{\infty}\int_{x=0}^{y}e^{-s(y-x)}f_{S,A}(y,x)dxdy +\int_{y=0}^{\infty}\int_{x=y}^{\infty}f_{S,A}(y,x)dxdy \vspace{2mm}\\
         =&\frac{\lambda}{\lambda-s}\phi_{S}(s)-\frac{s}{\lambda-s}\phi_{S}(\lambda)+s\theta\left[ \frac{g^{*}(\lambda)}{\lambda-s}-  \frac{\lambda}{(2\lambda-s)(\lambda-s)}g^{*}(s)-\frac{ g^{*}(2\lambda)}{2\lambda-s}\right].
    \end{array}  \label{p1}
\end{equation}
Note that for $\theta=0$, i.e., when $S,A$ are independent, \eqref{p1} coincides with \cite[eq. (9)]{boxpen}. 

We now focus on the derivation of the LST of the waiting time distribution for the $M/G/1$ queue with dependence among $S,A$ based on the FGM copula. To our best knowledge, this result is not available in the literature. In this work, we denote by $X_{\infty}$, or simply $X$ the steady-state counterpart of $X_{n}$. Then,
\begin{displaymath}
    \begin{array}{rl}
         w^{*}(s)=&E(e^{-s[W+S-A]^{+}})\vspace{2mm}  \\
         =&E\left(\int_{y=0}^{\infty}\int_{x=0}^{W+y}e^{-s(W+y-x)}f_{S,A}(y,x)dxdy\right.\\
         &+\left. \int_{y=0}^{\infty}\int_{x=W+y}^{\infty}f_{S,A}(y,x)dxdy\right)\vspace{2mm}\\
         =&E\left(e^{-sW}\left\{ \int_{y=0}^{\infty}e^{-sy}f_{S}(y)\int_{x=0}^{W+y}\lambda e^{-x(\lambda-s)}dxdy\right.\right.\vspace{2mm}\\&\left.\left.\vspace{2mm}+\theta \int_{y=0}^{\infty}e^{-sy}g(y)\int_{x=0}^{W+y}(2\lambda e^{-x(2\lambda-s)}-\lambda e^{-x(\lambda-s)})dxdy\right\}\right.\vspace{2mm}\\
         &+\left. \int_{y=0}^{\infty}f_{S}(y)\int_{x=W+y}^{\infty}\lambda e^{-x\lambda}dxdy+\theta \int_{y=0}^{\infty}g(y)\int_{x=W+y}^{\infty}(2\lambda e^{-x2\lambda}-\lambda e^{-x\lambda})dxdy\right).
    \end{array}
\end{displaymath}
Tedious but simple calculations lead to
\begin{displaymath}
\begin{array}{r}
    w^{*}(s)[1-\frac{\lambda}{\lambda-s}\phi_{S}(s)+\theta\frac{s\lambda}{(\lambda-s)(2\lambda-s)}g^{*}(s)]=s[\frac{\theta g^{*}(\lambda)-\phi_{S}(\lambda)}{\lambda-s}w^{*}(\lambda)-\frac{\theta g^{*}(2\lambda)}{2\lambda-s}w^{*}(2\lambda)],\end{array}
\end{displaymath}
or equivalently to
\begin{equation}
      w^{*}(s)=\frac{s[(\theta g^{*}(\lambda)-\phi_{S}(\lambda))(2\lambda-s)w^{*}(\lambda)-\theta g^{*}(2\lambda)(\lambda-s)w^{*}(2\lambda)]}{(2\lambda-s)(\lambda-s)-\lambda(2\lambda-s)\phi_{S}(s)+\theta s\lambda g^{*}(s)}.
   \label{v1}
\end{equation}

It is readily seen that in order to obtain $w^{*}(s)$ we need to derive first the unknown terms $w^{*}(\lambda)$, $w^{*}(2\lambda)$ in the right-hand side of \eqref{v1}. This can be done in two steps. First, since $w^{*}(0)=1$, by letting $s\to 0$, simple calculations leads to
\begin{equation}
    2(\theta g^{*}(\lambda)-\phi_{S}(\lambda))w^{*}(\lambda)-\theta g^{*}(2\lambda)w^{*}(2\lambda)=-2(1-\rho),\label{eq1}
\end{equation}
where $\rho:=\lambda E(S)$. To obtain an additional equation we need the following result:
\begin{theorem}\label{rouc}
 Equation \eqref{rou} has exactly one root, say $\tau_{1}$, such that $Re(\tau_{1})>0$, and a second root $\tau_{2}=0$.     
\end{theorem}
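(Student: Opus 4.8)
The plan is to recast \eqref{rou} as a zero-counting problem on the half-plane $\{Re(s)\ge 0\}$ and to settle it with Rouch\'e's theorem, using the Pollaczek--Khinchine kernel of the M/G/1 queue as the comparison function. Clearing the factors $\lambda-s$ and $2\lambda-s$ in \eqref{rou} (the values $s=\lambda,2\lambda$ are checked by inspection to be irrelevant), the equation becomes $\mathcal{K}(s)=0$, where
\[
\mathcal{K}(s):=(\lambda-s)(2\lambda-s)-\lambda(2\lambda-s)\phi_{S}(s)+\theta\lambda s\,g^{*}(s)=-(2\lambda-s)\chi(s)+\theta\lambda s\,g^{*}(s),\qquad \chi(s):=s-\lambda+\lambda\phi_{S}(s);
\]
that is, $\mathcal{K}$ is precisely the denominator in \eqref{v1}. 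Since $\phi_{S}$ and $g^{*}$ are analytic on $\{Re(s)>0\}$ and continuous up to the imaginary axis, $\mathcal{K}$ is analytic there, and the roots of \eqref{rou} with $Re(s)\ge0$ are exactly the zeros of $\mathcal{K}$ in that region. Because $g^{*}(0)=\int_{0}^{\infty}f_{S}(y)(1-2F_{S}(y))\,dy=\int_{0}^{1}(1-2u)\,du=0$, one gets $\mathcal{K}(0)=0$, while $\mathcal{K}'(0)=-2\lambda(1-\rho)\neq0$ under the stability condition $\rho=\lambda E(S)<1$; hence $s=0$ is a simple zero of $\mathcal{K}$, and this is the root $\tau_{2}$.

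For the comparison function I take $\mathcal{K}_{0}(s):=\mathcal{K}(s)|_{\theta=0}=-(2\lambda-s)\chi(s)$. By the classical Rouch\'e/argument-principle analysis of the M/G/1 waiting time, $\chi$ has exactly one zero in $\{Re(s)\ge0\}$, namely the simple zero at $s=0$; hence $\mathcal{K}_{0}$ has exactly two zeros there, at $s=0$ and $s=2\lambda$, both simple. I would then apply Rouch\'e on the contour $\Gamma_{R,\varepsilon}$ bounding $\{Re(s)>0\}\cap\{\varepsilon<|s|<R\}$ — a large half-disc indented by a small semicircle around the origin — with $\mathcal{K}=\mathcal{K}_{0}+\theta\lambda s\,g^{*}(s)$. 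On the large arc, $|\mathcal{K}_{0}(s)|$ is of order $|s|^{2}$ and dominates $|\theta\lambda s\,g^{*}(s)|=O(|s|)$, using $|g^{*}(s)|\le\int_{0}^{\infty}|f_{S}(y)(1-2F_{S}(y))|\,dy=\tfrac12$ for $Re(s)\ge0$. On the small semicircle both functions tend to $0$, but integration by parts gives $g^{*}(s)=s\int_{0}^{\infty}e^{-sy}F_{S}(y)\bar{F}_{S}(y)\,dy=O(s)$, so there $|\theta\lambda s\,g^{*}(s)|=O(\varepsilon^{2})$ against $|\mathcal{K}_{0}(s)|\sim 2\lambda(1-\rho)\varepsilon$. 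Granting the estimate on the imaginary axis (below), Rouch\'e gives that $\mathcal{K}$ has exactly one zero in $\{Re(s)>0,\ |s|<R\}$ for every large $R$, namely the perturbation of $s=2\lambda$; since $|\mathcal{K}(s)|\ge(|s|-2\lambda)^{2}-\tfrac12\lambda|s|>0$ for $|s|$ large, letting $R\to\infty$ shows that $\mathcal{K}$ has exactly one zero with $Re(s)>0$, which is $\tau_{1}$.

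The step I expect to be the main obstacle is verifying the Rouch\'e inequality $|\theta\lambda s\,g^{*}(s)|<|\mathcal{K}_{0}(s)|$ on the imaginary axis $s=iy$, $y\neq0$. By conjugate symmetry it suffices to take $y>0$. One has $|\mathcal{K}_{0}(iy)|=\sqrt{4\lambda^{2}+y^{2}}\,|\chi(iy)|$, and from $\chi(iy)=iy-\lambda\bigl(1-\phi_{S}(iy)\bigr)$ one reads off $Im\,\chi(iy)=y-\lambda E[\sin(yS)]\ge y-\lambda\,y\,E(S)=(1-\rho)y>0$, using $\sin t\le t$ for $t\ge0$; hence $|\chi(iy)|\ge(1-\rho)y$. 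Combined with $|g^{*}(iy)|\le|y|\int_{0}^{\infty}F_{S}(y)\bar{F}_{S}(y)\,dy\le|y|E(S)$ and the global bound $|g^{*}(iy)|\le\tfrac12$, this is exactly the point at which the FGM restriction $|\theta|\le1$, together with $\rho<1$, forces the strict inequality; I expect to carry it out by splitting the range of $y$ and using whichever of the two bounds on $|g^{*}(iy)|$ is sharper. A route that avoids the full inequality is a homotopy in $\theta\in[-1,1]$: if $\mathcal{K}_{\theta}$ has no zeros on $\Gamma_{R,\varepsilon}$ for any $\theta$ — the only nontrivial point again being $\mathcal{K}_{\theta}(iy)\neq0$ for $y\neq0$ — then the number of its zeros inside $\Gamma_{R,\varepsilon}$ is a continuous, integer-valued, hence constant function of $\theta$, equal to its value $1$ at $\theta=0$.
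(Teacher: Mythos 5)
Your overall architecture — clearing denominators to get $\mathcal{K}(s)=(\lambda-s)(2\lambda-s)-\lambda(2\lambda-s)\phi_S(s)+\theta\lambda s\,g^{*}(s)$, checking that $s=0$ is a simple zero via $g^{*}(0)=0$ and $\mathcal{K}'(0)=-2\lambda(1-\rho)$, and comparing against the $\theta=0$ kernel $-(2\lambda-s)\chi(s)$ on an indented half-disc — is sound and genuinely different from the paper's route: the paper invokes the Klimenok/Adan--van Leeuwaarden modification of Rouch\'e on the expanding circles $|1-s/k|=1$, which lie inside $\{Re(s)\ge 0\}$ and touch the imaginary axis only at $s=0$, where a derivative condition (reducing to $E(S-A)<0$, i.e.\ stability) replaces any boundary inequality. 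That choice of contour is precisely what lets the paper sidestep the step you correctly identify as the main obstacle.

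And that obstacle is a genuine gap in your proposal: the Rouch\'e inequality on the imaginary axis does not follow from the bounds you offer. With $|\chi(iy)|\ge(1-\rho)|y|$ and $|2\lambda-iy|=\sqrt{4\lambda^2+y^2}$, the bound $|g^{*}(iy)|\le\tfrac12$ requires $|\theta|<2\sqrt{4\lambda^{2}+y^{2}}\,(1-\rho)/\lambda$, which near $y=0$ forces $|\theta|<4(1-\rho)$; the bound $|g^{*}(iy)|\le|y|E(S)$ requires $|\theta|\rho|y|<\sqrt{4\lambda^{2}+y^{2}}\,(1-\rho)$, which for large $|y|$ forces $|\theta|\rho<1-\rho$. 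For $\rho$ close to $1$ and $|\theta|$ close to $1$ the two admissible ranges of $y$ do not cover the axis, so no splitting of the range rescues the argument. The clean fix is the one implicit in your homotopy variant: observe that on the axis
\begin{displaymath}
\mathcal{K}_{\theta}(iy)=(\lambda-iy)(2\lambda-iy)\bigl(1-E\bigl(e^{-iy(S-A)}\bigr)\bigr),
\end{displaymath}
where $(S,A)$ carries the FGM joint density \eqref{biv1}; since $S-A$ is absolutely continuous (it inherits a density from $f_{S,A}$), its characteristic function satisfies $|E(e^{-iy(S-A)})|<1$ for all $y\neq0$, hence $\mathcal{K}_{\theta}(iy)\neq0$ for every $\theta\in[-1,1]$. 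Combined with your (correct) estimates on the large arc and the small semicircle, the zero count inside the indented contour is then a constant integer in $\theta$, equal to $1$ at $\theta=0$, which completes the proof. As written, though, the decisive inequality is asserted rather than established, so the argument is incomplete.
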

\begin{proof}
    The proof is based on a modification of Rouche's theorem \cite{klimenok,adanvan}. Let $z=1-s/k$, and $D_{k}=\{s:|z|=1\}$. In terms of $s$, the contour $D_k$
is a circle of radius $k$ and origin $k$. We let $k\to\infty$, and denote by $D$ the limiting contour. For $s\in D$, excluding $s=0$, or equivalently $z=1$, one can easily show that
\begin{displaymath}
    |\lambda(2\lambda-s)\phi_{S}(s)+\lambda\theta(-s)g^{*}(s)|<|(2\lambda-s)(\lambda-s)|,
\end{displaymath}
since $\frac{\lambda}{\lambda-s}$, $\frac{-s\lambda}{(\lambda-s)(2\lambda-s)}$ are ratios of polynomials with a strictly higher degree at the denominator which leads to
\begin{displaymath}
    \left|\frac{\lambda}{\lambda-s}\phi_{S}(s)+\theta\frac{-s\lambda}{(\lambda-s)(2\lambda-s)}g^{*}(s)\right|\to 0,
\end{displaymath}
on $D$ (excluding $s=0$, or equivalently $z=1$).

Moreover, $\lambda(2\lambda-s)\phi_{S}(s)+\lambda\theta(-s)g^{*}(s)$, and $(2\lambda-s)(\lambda-s)$ are continuous on $D$. Thus, to apply \cite[Theorem 1]{klimenok} we need to show that
\begin{displaymath}
\begin{array}{l}
    \frac{d}{dz}\left[1-\frac{\lambda}{\lambda-k+kz}\phi_{S}(k-kz)-\theta g^{*}(k-kz)\frac{\lambda(kz-k)}{(\lambda-k+kz)(2\lambda-k+kz)}\right]|_{z=1}>0,
    \end{array}
\end{displaymath}
or equivalently, to show that
\begin{displaymath}
    \frac{d}{dz}\left[1-E(e^{-(k-kz)(S-A)})\right]|_{z=1}=-kE(S-A),
\end{displaymath}
where,
\begin{displaymath}
    E(S-A)=\int_{y=0}^{\infty}\int_{x=0}^{\infty}(y-x) 
f_{S,A}(y,x)dxdy=E(S)-\frac{1}{\lambda}<0,\end{displaymath}
due to the stability. Thus, by using  we conclude that inside $D$, the number of roots of \eqref{rou} equals the number of roots of $(2\lambda-s)(\lambda-s)$ inside $D$ minus 1, Thus, we have only one root, say $\tau_{1}$ with $Re(s)>0$. The second one is $\tau_{2}=0$.
\end{proof}

Since $w^{*}(s)$ is an analytic function in $Re(s)>0$, $\tau_{1}$, should be a zero of the numerator in \eqref{v1}. Substituting $s=\tau_{1}$ in the numerator we obtain
\begin{equation}
    (2\lambda-\tau_{1})(\theta g^{*}(\lambda)-\phi_{S}(\lambda))w^{*}(\lambda)-\theta(\lambda-\tau_{1})g^{*}(2\lambda)w^{*}(2\lambda)=0.
    \label{eq2}
\end{equation}
Solving \eqref{eq1}, \eqref{eq2}, we obtain the unknowns $w^{*}(\lambda)$, $w^{*}(2\lambda)$ as given in \eqref{cvq}, thus, $w^{*}(s)$ is fully known in \eqref{v1}. Using \eqref{v1} and \eqref{p1} in \eqref{bas1}, the LST of the maximum overlap time in the $M/G/1$ queue is now completely known. Differentiating with respect to $s$, and letting $s\to 0$, we obtain the expected duration of the maximum overlap time in \eqref{exp}. This completes the proof.
\end{proof}
\subsection{Maximum overlap time for the $E(n,\lambda)/G/1$ under dependence based on the FGM copula}
Let us now consider the case where $A\sim Erlang(n,\lambda)$, $n>1$. Then,
\begin{equation*}
\begin{array}{rl}
         f_{S,A}(y,x)=&f_{S}(y)\frac{\lambda^{n}}{(n-1)!}x^{n-1} e^{-\lambda x}+\theta g(y)\left[2\frac{\lambda^{n}}{(n-1)!}x^{n-1}\sum_{i=0}^{n-1}\frac{(\lambda x)^{i}}{i!} e^{-2\lambda x}-\frac{\lambda^{n}}{(n-1)!}x^{n-1} e^{-\lambda x}\right].\end{array}
  \label{biv2}   
\end{equation*}
The LST of the waiting time is given in the following theorem.
\begin{theorem}\label{th11}
    The LST of the waiting time distribution satisfies the following functional equation
    \begin{equation}
        \begin{array}{l}
             w^{*}(s)\left[1-\left( \frac{\lambda}{\lambda-s} \right)^{n}\phi_{S}(s)-\theta g^{*}(s)\left[2\sum_{i=0}^{n-1}\binom{n+i-1}{i}\left( \frac{\lambda}{2\lambda-s} \right)^{n+i}-\left( \frac{\lambda}{\lambda-s} \right)^{n}\right]\right]\vspace{2mm}  \\
             =\sum_{k=0}^{n-1}\left(\frac{(-\lambda)^{k}}{k!}-\left( \frac{\lambda}{\lambda-s} \right)^{n}\frac{(s-\lambda)^{k}}{k!}\right)\sum_{m=0}^{k}\binom{k}{m}[\phi_{S}^{(m)}(\lambda)-\theta g^{*(m)}(\lambda)]w^{*(k-m)}(\lambda)\vspace{2mm}\\
             +2\theta\sum_{i=0}^{n-1}\binom{n+i-1}{i}\sum_{k=0}^{n+i-1}\left(\frac{1}{2^{n+i}}\frac{(-2\lambda)^{k}}{k!}-\left( \frac{\lambda}{2\lambda-s} \right)^{n+i}\frac{(s-2\lambda)^{k}}{k!}\right)\vspace{2mm}\\
             \times\sum_{m=0}^{k}\binom{k}{m}g^{*(m)}(2\lambda)w^{*(k-m)}(2\lambda),
        \end{array}\label{vn}
    \end{equation}
    where for a function $h(x)$ denote by $h^{(i)}(a)$ its $i$th derivative with respect to $x$ at point $x=a$.
\end{theorem}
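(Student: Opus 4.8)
The plan is to follow the derivation of the functional equation \eqref{v1} in the proof of Theorem \ref{th1}, the only structural change being that the Erlang interarrival density turns the inner exponential integrals into incomplete-gamma expressions. Starting from Lindley's recursion $W_{n+1}=[W_{n}+S_{n}-A_{n}]^{+}$ and passing to the steady state, $W\stackrel{d}{=}[W+S-A]^{+}$, so that
\begin{displaymath}
w^{*}(s)=E\Big(\int_{y=0}^{\infty}\int_{x=0}^{W+y}e^{-s(W+y-x)}f_{S,A}(y,x)\,dx\,dy+\int_{y=0}^{\infty}\int_{x=W+y}^{\infty}f_{S,A}(y,x)\,dx\,dy\Big),
\end{displaymath}
the inner split reflecting that $[W+y-x]^{+}=W+y-x$ on $\{x\le W+y\}$ and $0$ on $\{x>W+y\}$. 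I then substitute \eqref{biv2}, which I first rewrite as $f_{S,A}(y,x)=[f_{S}(y)-\theta g(y)]f_{A}(x)+2\theta g(y)h(x)$, where $f_{A}(x)=\frac{\lambda^{n}}{(n-1)!}x^{n-1}e^{-\lambda x}$ and $h(x)=\frac{\lambda^{n}}{(n-1)!}x^{n-1}\sum_{i=0}^{n-1}\frac{(\lambda x)^{i}}{i!}e^{-2\lambda x}=\sum_{i=0}^{n-1}\frac{\lambda^{n+i}}{(n-1)!\,i!}x^{n+i-1}e^{-2\lambda x}$. This cleanly isolates the part with rate $\lambda$ in $x$ from the genuinely FGM part with rate $2\lambda$ in $x$.

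The next step is to evaluate the inner $x$-integrals by means of the elementary identities $\int_{0}^{t}x^{p-1}e^{-ax}\,dx=\frac{(p-1)!}{a^{p}}\big[1-e^{-at}\sum_{k=0}^{p-1}\frac{(at)^{k}}{k!}\big]$ and $\int_{t}^{\infty}x^{p-1}e^{-ax}\,dx=\frac{(p-1)!}{a^{p}}e^{-at}\sum_{k=0}^{p-1}\frac{(at)^{k}}{k!}$, applied with $t=W+y$, $(p,a)=(n,\lambda-s)$ for the $f_{A}$ part and $(p,a)=(n+i,2\lambda-s)$ for the $h$ part. After this, every resulting term is a constant (a ratio such as $\big(\frac{\lambda}{\lambda-s}\big)^{n}$ or $\big(\frac{\lambda}{2\lambda-s}\big)^{n+i}$) times one of: $e^{-s(W+y)}$; $(W+y)^{k}e^{-\lambda(W+y)}$ with $0\le k\le n-1$; or $(W+y)^{k}e^{-2\lambda(W+y)}$ with $0\le k\le n+i-1$; each of these is then multiplied by $f_{S}(y)-\theta g(y)$ or by $g(y)$, integrated over $y$, and averaged over $W$.

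The terms carrying $e^{-s(W+y)}$ reproduce $w^{*}(s)$ multiplied respectively by $\big(\frac{\lambda}{\lambda-s}\big)^{n}(\phi_{S}(s)-\theta g^{*}(s))$ and by $2\theta g^{*}(s)\sum_{i=0}^{n-1}\binom{n+i-1}{i}\big(\frac{\lambda}{2\lambda-s}\big)^{n+i}$; transferring these to the left produces exactly the bracket multiplying $w^{*}(s)$ in \eqref{vn}. For each remaining term I expand $(W+y)^{k}=\sum_{m=0}^{k}\binom{k}{m}W^{k-m}y^{m}$ and use $\int_{0}^{\infty}y^{m}e^{-\lambda y}f_{S}(y)\,dy=(-1)^{m}\phi_{S}^{(m)}(\lambda)$, the analogous identities with $g$ evaluated at $\lambda$ and at $2\lambda$, and $E(W^{k-m}e^{-\lambda W})=(-1)^{k-m}w^{*(k-m)}(\lambda)$ (and the same at $2\lambda$); collecting the resulting constants and rewriting $(-1)^{k}\lambda^{k}=(-\lambda)^{k}$, $(-1)^{k}(\lambda-s)^{k}=(s-\lambda)^{k}$, $(-1)^{k}(2\lambda)^{k}=(-2\lambda)^{k}$, $(-1)^{k}(2\lambda-s)^{k}=(s-2\lambda)^{k}$ yields the two double sums on the right-hand side of \eqref{vn}. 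Interchanging the $W$-expectation with the $x$- and $y$-integrals is justified by Tonelli's theorem together with $\int_{0}^{\infty}|g(x)|\,dx\le 1$ and the boundedness of $x^{p}e^{-ax}$ on $[0,\infty)$ for the relevant $a\in\{\lambda,2\lambda,\lambda-s,2\lambda-s\}$ with $Re(s)>0$; the derivatives $\phi_{S}^{(m)},g^{*(m)},w^{*(m)}$ exist at $\lambda$ and $2\lambda$ because the corresponding transforms are analytic in the open right half-plane.

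I expect the main obstacle to be purely the bookkeeping: keeping the three families of nested sums ($k$ running up to $n-1$ with rate $\lambda$, and $k$ running up to $n+i-1$ with rate $2\lambda$ together with the extra summation in $i$) aligned, and tracking the signs produced simultaneously by the binomial expansion of $(W+y)^{k}$ and by the $m$-fold differentiation of the transforms; apart from that the computation is direct. A useful sanity check is that putting $\theta=0$ collapses \eqref{vn} to the classical $E(n,\lambda)/G/1$ Lindley functional equation in the $n$ unknowns $w^{*}(\lambda),\dots,w^{*(n-1)}(\lambda)$, while setting $n=1$ recovers the functional equation derived for the $M/G/1$ queue just before \eqref{v1}.
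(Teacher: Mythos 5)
Your proposal is correct and follows essentially the same route as the paper: conditioning on $W$ in the steady-state Lindley identity, splitting the inner integral at $x=W+y$, evaluating the incomplete-gamma integrals, and identifying the resulting $(W+y)^{k}e^{-a(W+y)}$ terms with derivatives of $\phi_{S}$, $g^{*}$ and $w^{*}$ at $\lambda$ and $2\lambda$ via the binomial expansion. The paper compresses all of this into "tedious but straightforward calculations" using the derivative-of-the-transform identity, so your write-up is simply a more explicit version of the same argument, with the sign bookkeeping $(-1)^{k}\lambda^{k}=(-\lambda)^{k}$ etc.\ correctly matching the form of \eqref{vn}.
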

\begin{proof}
  We have,
    \begin{displaymath}
    \begin{array}{rl}
         w^{*}(s)=&E(e^{-s[W+S-A]^{+}})\vspace{2mm}  \\
         =&E\left(\int_{y=0}^{\infty}\int_{x=0}^{W+y}e^{-s(W+y-x)}f_{S,A}(y,x)dxdy\right.\\
         &+\left. \int_{y=0}^{\infty}\int_{x=W+y}^{\infty}f_{S,A}(y,x)dxdy\right)\vspace{2mm}\\
         =&E\left(e^{-sW}\left\{ \int_{y=0}^{\infty}e^{-sy}f_{S}(y)\int_{x=0}^{W+y}\frac{\lambda^{n}}{(n-1)!}x^{n-1} e^{-x(\lambda-s)}dxdy\right.\right.\vspace{2mm}\\&\left.\left.+\theta \int_{y=0}^{\infty}e^{-sy}g(y)(2\frac{\lambda^{n}}{(n-1)!}\sum_{i=0}^{n-1}\frac{\lambda^{i}}{i!}\int_{x=0}^{W+y}x^{n+i-1} e^{-x(2\lambda-s)}dx-\frac{\lambda^{n}}{(n-1)!}\int_{x=0}^{W+y}x^{n-1} e^{-x(\lambda-s)}dx)dy\right\}\right.\vspace{2mm}\\
         &+\left. \int_{y=0}^{\infty}f_{S}(y)\int_{x=W+y}^{\infty}\frac{\lambda^{n}}{(n-1)!}x^{n-1} e^{-x\lambda}dxdy\right.\vspace{2mm}\\&\left.+\theta \int_{y=0}^{\infty}g(y)\int_{x=W+y}^{\infty}(2\frac{\lambda^{n}}{(n-1)!}x^{n-1}\sum_{i=0}^{n-1}\frac{(\lambda x)^{i}}{i!} e^{-2\lambda x}-\frac{\lambda^{n}}{(n-1)!}x^{n-1} e^{-\lambda x})dxdy\right).
    \end{array}
\end{displaymath}
Having in mind that $\int_{0}^{x}e^{-ax}x^{k}f(x)dx=(-1)^{k}\mathcal{L}^{(k)}(a)$, where $\mathcal{L}^{(k)}(a)$ denotes the $k$th derivative of the LST of a function $f(x)$ at $x=a$, we can arrive after tedious but straightforward calculations in \eqref{vn}.
\end{proof}
\begin{proposition}\label{prop1}
    For $\theta\neq 0$, the equation
    \begin{equation}
        \left( \frac{\lambda}{\lambda-s} \right)^{n}\phi_{S}(s)+\theta g^{*}(s)\left[2\sum_{i=0}^{n-1}\binom{n+i-1}{i}\left( \frac{\lambda}{2\lambda-s} \right)^{n+i}-\left( \frac{\lambda}{\lambda-s} \right)^{n}\right]=1,
        \label{equ1}
    \end{equation}
    has exactly $3n-2$ roots in the right-half plane with positive real parts and one root equal to zero.
\end{proposition}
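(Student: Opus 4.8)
The plan is to repeat the Rouché-type argument of Theorem~\ref{rouc}, the only change being that the polynomial of comparison now has degree $3n-1$ instead of $2$; the hypothesis $\theta\neq 0$ is precisely what keeps the factor $(2\lambda-s)^{2n-1}$ in the common denominator. First I would clear denominators in \eqref{equ1} by multiplying both sides by $(\lambda-s)^{n}(2\lambda-s)^{2n-1}$, rewriting the equation as $B(s)=A(s)$, where
\[
B(s)=(\lambda-s)^{n}(2\lambda-s)^{2n-1}
\]
is a polynomial of degree $3n-1$ whose zeros are $s=\lambda$ (of multiplicity $n$) and $s=2\lambda$ (of multiplicity $2n-1$), all lying in $\mathrm{Re}(s)>0$, and $A(s)$ collects the terms containing $\phi_{S}$ and $g^{*}$, each of its polynomial factors having degree at most $2n-1$ while $|\phi_{S}(s)|\le 1$ and $|g^{*}(s)|\le 1$ on $\mathrm{Re}(s)\ge 0$. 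I would check that this clearing introduces no spurious roots, since $B$ vanishes at $s=\lambda,2\lambda$ whereas $A$ does not, so the zeros of $B(s)-A(s)$ in $\mathrm{Re}(s)>0$ are exactly those of \eqref{equ1}. I would also record that $s=0$ is always a root: $\phi_{S}(0)=1$, $g^{*}(0)=0$, and the bracketed factor in \eqref{equ1} vanishes at $s=0$ by the elementary identity $\sum_{i=0}^{n-1}\binom{n+i-1}{i}2^{-i}=2^{n-1}$, hence $A(0)=B(0)=\lambda^{n}(2\lambda)^{2n-1}$.

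Then, as in Theorem~\ref{rouc}, I would put $z=1-s/k$, take $D_{k}=\{s:|z|=1\}$ (the circle of radius $k$ and center $k$), let $k\to\infty$, and denote by $D$ the limiting contour. On $D$ excluding $s=0$ I would establish $|A(s)|<|B(s)|$; dividing by $|B(s)|$ this amounts to
\[
\left|\left(\frac{\lambda}{\lambda-s}\right)^{n}\phi_{S}(s)+\theta g^{*}(s)\left[2\sum_{i=0}^{n-1}\binom{n+i-1}{i}\left(\frac{\lambda}{2\lambda-s}\right)^{n+i}-\left(\frac{\lambda}{\lambda-s}\right)^{n}\right]\right|<1,
\]
which holds because $\frac{\lambda}{\lambda-s}$ and $\frac{\lambda}{2\lambda-s}$ are ratios of polynomials with strictly larger degree in the denominator (so their powers tend to $0$ along $D$, away from the direction of $s=0$), together with $|\phi_{S}(s)|,|g^{*}(s)|\le 1$ and $g^{*}(0)=0$. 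I would then verify the extra boundary condition of the modified Rouché theorem \cite{klimenok,adanvan}: exactly as in Theorem~\ref{rouc} it reduces to $\frac{d}{dz}\left[1-E(e^{-(k-kz)(S-A)})\right]|_{z=1}=-kE(S-A)>0$, i.e. $E(S-A)=E(S)-n/\lambda<0$, which is the stability condition of the $E(n,\lambda)/G/1$ queue.

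Applying the modified Rouché theorem then gives that the number of roots of \eqref{equ1} with $\mathrm{Re}(s)>0$ equals the number of zeros of $B(s)$ with positive real part minus one, namely $(3n-1)-1=3n-2$, the remaining root being $s=0$; for $n=1$ this recovers Theorem~\ref{rouc}. The delicate point is the second step: making the strict inequality $|A(s)|<|B(s)|$ rigorous on the part of $D$ near $s=0$, where $|A|$ and $|B|$ have the same limiting value. There one has to use both the cancellation that forces the bracket in \eqref{equ1} to vanish at $s=0$ and the fact $g^{*}(0)=0$ (which makes the $\theta$-term $O(s^{2})$ near the origin), the remaining balance being settled precisely by the derivative condition of the modified Rouché theorem.
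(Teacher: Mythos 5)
Your proposal is correct and takes essentially the same approach the paper intends: the paper omits the proof entirely, stating only that it is ``similar to the one in Theorem \ref{rouc}'', i.e.\ the modified Rouch\'e argument of \cite{klimenok,adanvan}. Your write-up actually supplies the missing details --- the comparison polynomial $(\lambda-s)^{n}(2\lambda-s)^{2n-1}$ of degree $3n-1$ with all zeros in $\mathrm{Re}(s)>0$, the count $(3n-1)-1=3n-2$, the verification via $g^{*}(0)=0$ and the negative-binomial identity that $s=0$ is a root, and the stability condition $E(S)<n/\lambda$ --- and all of these check out.
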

\begin{proof}
    The proof is similar to the one in Theorem \ref{rouc} and further details are omitted.
\end{proof}

Note that in order to use \eqref{vn} to derive $w^{*}(s)$, we first need to derive $3n-1$ unknowns, namely $w^{*(k)}(\lambda)$, $k=0,1,\ldots,n-1$, and $w^{*(k)}(2\lambda)$, $k=0,1,\ldots,2n-2$, which arise on the right-hand side of \eqref{vn}. This task is accomplished by using Proposition \ref{prop1}. Note that since $w^{*}(s)$ is an analytic function in the right-half plane, the roots of \eqref{equ1} for $Re(s)>0$, or equivalently the roots of 
\begin{equation}\begin{array}{l}
    (\lambda-s)^{n}(2\lambda-s)^{2n-1}=\lambda^{n}\left[(2\lambda-s)^{2n-1}\phi_{S}(s)+\theta g^{*}(s)\right.\vspace{2mm}\\
    \left.\times\left(2\sum_{i=0}^{n-1}\binom{n+i-1}{i}\lambda^{i}(\lambda-s)^{n}(2\lambda-s)^{n-i-1}-(2\lambda-s)^{2n-1}\right)\right],
    \end{array}
\end{equation}
for $Re(s)>0$ should be roots of the right-hand side of \eqref{vn}. By substituting these roots on the right-hand side of \eqref{vn} we obtain $3n-2$ linear equations. The last one is derived by that fact that $w^{*}(0)=1$. Solving this linear system of $3n-1$ equations we obtain $w^{*(k)}(\lambda)$, $k=0,1,\ldots,n-1$, and $w^{*(k)}(2\lambda)$, $k=0,1,\ldots,2n-2$. Thus, $w^{*}(s)$ is considered known by combining Theorem \ref{th11}, and Proposition \ref{prop1}.

Then, for $A\sim Erlang(n,\lambda)$, $n>1$,
    \begin{equation}
        \begin{array}{rl}
             E(e^{-[S-A]^{+}})=&\int_{y=0}^{\infty}\int_{x=0}^{y}e^{-s(y-x)}f_{S,A}(y,x)dxdy +\int_{y=0}^{\infty}\int_{x=y}^{\infty}f_{S,A}(y,x)dxdy \vspace{2mm}\\=&\left(\frac{\lambda}{\lambda-s}\right)^{n}\phi_{S}(s)+\left(1-\left(\frac{\lambda}{\lambda-s}\right)^{n}\right)\sum_{k=0}^{n-1}\frac{(s-\lambda)^{k}}{k!}\phi_{S}^{(k)}(\lambda) \vspace{2mm} \\
             &+\theta\left\{ g^{*}(s)(2\sum_{i=0}^{n-1}\binom{n+i-1}{i}\left(\frac{\lambda}{2\lambda-s}\right)^{n+i}+\left(\frac{\lambda}{\lambda-s}\right)^{n})\right.\vspace{2mm}\\
             &\left.+2\sum_{i=0}^{n-1}\binom{n+i-1}{i}\sum_{k=0}^{n+i-1}\left(\frac{1}{2^{n+i}}\frac{(-2\lambda)^{k}}{k!}-\left(\frac{\lambda}{2\lambda-s}\right)^{n+i}\frac{(s-2\lambda)^{k}}{k!}\right)g^{*(k)}(2\lambda)\right.\vspace{2mm}\\
             &\left. + \sum_{k=0}^{n-1}\left(\frac{(-\lambda)^{k}}{k!}-\left(\frac{\lambda}{\lambda-s}\right)^{n}\frac{(s-\lambda)^{k}}{k!}\right)g^{*(k)}(\lambda) \right\}.
        \end{array}\label{wa1}
    \end{equation}

The LST of the distribution of the maximum overlap time for the $E(n,\lambda)/G/1$ queue is given by multiplying $w^{*}(s)$ derived in Theorem \ref{th11} and \eqref{wa1}.
\section{The minimum overlap time}\label{sec2}
 We now turn our attention to the derivation of the LST of the distribution of the minimum overlap time of the $n$th customer, say $V_{n}$ with the previous and the next one. Following \cite{boxpen},
 \begin{displaymath}
     V_{n}=min(W_{n},[W_{n}+S_{n}-A_{n}]^{+}).
 \end{displaymath}
 Then, we have the following result:
 \begin{theorem}\label{th2}
     The LST of the steady-state minimum overlap time $V_{\infty}$ for the $M/G/1$ queue under dependence based on the FGM copula is given by:
     \begin{equation}
         \begin{array}{rl}
              E(e^{-s V_{\infty}})=& E(e^{-sW_{\infty}})\left[ 2+\frac{s}{\lambda-s}\phi_{S}(\lambda)-\frac{\lambda}{\lambda-s}\phi_{S}(s)+\theta s(\frac{\lambda}{(2\lambda-s)(\lambda-s)}g^{*}(s)+\frac{g^{*}(2\lambda)}{2\lambda-s}-\frac{g^{*}(\lambda)}{\lambda-s})\right],
         \end{array}\label{minn}
     \end{equation}
     
 \end{theorem}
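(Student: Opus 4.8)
The plan is to avoid the direct double integration that produced \eqref{p1} for $E(e^{-s[S-A]^+})$ and instead to exploit the fact that, for a single FIFO server, the minimum and the maximum adjacent overlap times of the $n$th customer are the two order statistics of the \emph{same} pair of waiting times. Writing $W_{n+1}:=[W_n+S_n-A_n]^+$ for the waiting time of the next customer, Lindley's recursion together with the definition of the minimum overlap time in Section \ref{sec2} gives $V_n=\min(W_n,W_{n+1})$, while a one-line case distinction according to the sign of $S_n-A_n$ turns the decomposition \eqref{bas} into $M_n=W_n+[S_n-A_n]^+=\max(W_n,W_{n+1})$. Hence $\{V_n,M_n\}=\{W_n,W_{n+1}\}$ as multisets, so on every sample path
\[ e^{-sV_n}+e^{-sM_n}=e^{-sW_n}+e^{-sW_{n+1}}. \]

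Next I would take expectations of this identity and let $n\to\infty$: both $W_n$ and $W_{n+1}$ converge in distribution to $W_\infty$ and $x\mapsto e^{-sx}$ is bounded for $Re(s)\ge 0$, so the expectations converge and one obtains $E(e^{-sV_\infty})+E(e^{-sM_\infty})=2w^*(s)$, that is,
\[ E(e^{-sV_\infty})=2w^*(s)-E(e^{-sM_\infty}). \]
This is the crux of the argument: a brute-force treatment of $V_\infty$ would run into the term $E(e^{-s[W+S-A]^+}\mathbf{1}\{S<A\})$, which cannot be expressed through $w^*(s)$ alone (it requires the full law of $W_\infty$), whereas the order-statistics identity bypasses it.

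It then remains only to substitute what is already available: by \eqref{bas1}, $E(e^{-sM_\infty})=w^*(s)\,E(e^{-s[S-A]^+})$, and $E(e^{-s[S-A]^+})$ is the closed form \eqref{p1}. Plugging these in and grouping the $\theta$-terms should give exactly \eqref{minn}, with the factor $w^*(s)=E(e^{-sW_\infty})$ pulled out. As a check, at $\theta=0$ the bracket in \eqref{minn} reduces to $2+\frac{s}{\lambda-s}\phi_S(\lambda)-\frac{\lambda}{\lambda-s}\phi_S(s)=2-E(e^{-s[S-A]^+})$, recovering the independent $G/G/1$ result of \cite{boxpen}.

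I do not expect a genuine obstacle. The two points deserving a brief word are the identity $W_n+[S_n-A_n]^+=\max(W_n,[W_n+S_n-A_n]^+)$, immediate by splitting into $S_n\ge A_n$ and $S_n<A_n$, and the interchange of limit and expectation, which is routine by the boundedness of $e^{-sx}$. The residual manipulation — rewriting $2w^*(s)-w^*(s)E(e^{-s[S-A]^+})$ via \eqref{p1} — is just elementary algebra of rational functions.
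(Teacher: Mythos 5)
Your argument is correct, and it reaches \eqref{minn} by a genuinely different route than the paper. The paper's proof imports the representation $E(e^{-sV_{\infty}})=w^{*}(s)\left[1+P(S>A)-E(e^{-s(S-A)}\mathbf{1}(S>A))\right]$ from \cite[Theorem 3.2]{boxpen} and then evaluates the two integrals $P(S>A)$ and $E(e^{-s(S-A)}\mathbf{1}(S>A))$ afresh under the FGM joint density \eqref{biv1}. You instead observe that $M_n=W_n+[S_n-A_n]^{+}=\max(W_n,W_{n+1})$ and $V_n=\min(W_n,W_{n+1})$, so that $e^{-sV_n}+e^{-sM_n}=e^{-sW_n}+e^{-sW_{n+1}}$ pathwise, whence $E(e^{-sV_{\infty}})=2w^{*}(s)-E(e^{-sM_{\infty}})=w^{*}(s)\bigl(2-E(e^{-s[S-A]^{+}})\bigr)$ by \eqref{bas1}; substituting \eqref{p1} gives exactly the bracket in \eqref{minn}. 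The two representations are reconciled by the elementary identity $2-E(e^{-s[S-A]^{+}})=1+P(S>A)-E(e^{-s(S-A)}\mathbf{1}(S>A))$, since $E(e^{-s[S-A]^{+}})=P(S\le A)+E(e^{-s(S-A)}\mathbf{1}(S>A))$. What your route buys is self-containedness and economy: it needs no new integral computations beyond those already done for Theorem \ref{th1}, and it makes transparent why the minimum and maximum overlap LSTs sum to $2w^{*}(s)$. What the paper's route buys is that it tracks the two probabilistically meaningful quantities $P(S>A)$ and $E(e^{-s(S-A)}\mathbf{1}(S>A))$ explicitly, which is the form that generalizes directly to the Erlang and proportional-dependence cases in Theorem \ref{th3} and Section \ref{sec3}. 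Your two flagged technical points (the case split proving $W_n+[S_n-A_n]^{+}=\max(W_n,[W_n+S_n-A_n]^{+})$, and passing to the limit using boundedness of $x\mapsto e^{-sx}$ for $Re(s)\ge 0$ together with convergence in distribution of $V_n$ and $M_n$) are both sound and suffice.
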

 \begin{proof}
     Following \cite[Theorem 3.2]{boxpen},
     \begin{equation}
          E(e^{-s V_{\infty}})= E(e^{-sW_{\infty}})[1+P(S>A)-E(e^{-s(S-A)}1(S>A))].\label{vb}
     \end{equation}
     Note that,
     \begin{displaymath}
         \begin{array}{rl}
              P(S>A)=&\int_{y=0}^{\infty}\int_{x=0}^{y}f_{S,A}(y,x)dxdy  \vspace{2mm}\\
              =& \int_{y=0}^{\infty}\int_{x=0}^{y}[f_{S}(y)\lambda e^{-\lambda x}+\theta g(y)\left[2\lambda e^{-2\lambda x}-\lambda e^{-\lambda x}\right]]dydx\vspace{2mm}\\
              =&1-\phi_{S}(\lambda)+\theta(g^{*}(\lambda)-g^{*}(2\lambda)).
         \end{array}
     \end{displaymath}
     Moreover,
     \begin{displaymath}
         \begin{array}{rl}
            E(e^{-s(S-A)}1(S>A))=  & \int_{y=0}^{\infty}\int_{x=0}^{y}e^{-s(y-x)}f_{S,A}(y,x)dxdy \vspace{2mm} \\
              =&\frac{\lambda}{\lambda-s}(\phi_{S}(s)-\phi_{S}(\lambda))\vspace{2mm}\\&+\theta[\frac{\lambda}{\lambda-s}g^{*}(\lambda)-\frac{2\lambda}{2\lambda-s}g^{*}(2\lambda)-\frac{s\lambda}{(\lambda-s)(2\lambda-s)}g^{*}(s)] 
         \end{array}
     \end{displaymath}
     Substituting the last two derivations in \eqref{vb}, we result in \eqref{minn}, where $w^{*}(s)=E(e^{-sW_{\infty}})$ is given in \eqref{v1}.
 \end{proof}
 \begin{remark}
     Note that for $\theta=0$, i.e., the independent case, Theorem \ref{th2} reduces to \cite[Theorem 3.2]{boxpen}.
 \end{remark}

 Let's focus now on the minimum overlap time for the $E(n,\lambda)/G/1$ queue. The next theorem is the main result.
 \begin{theorem}
     \label{th3}
     The LST of the steady-state minimum overlap time $V_{\infty}$ for the $E(n,\lambda)/G/1$ queue under dependence based on the FGM copula is given by:
     \begin{equation}
              E(e^{-s V_{\infty}})=w^{*}(s)r(s), 
        \label{minn1}
     \end{equation}
     where $w^{*}(s)$ as derived in Theorem \ref{th11}, and 
     \begin{displaymath}
         \begin{array}{l}
              r(s)=2- \left( \frac{\lambda}{\lambda-s} \right)^{n}\phi_{S}(s)-\sum_{k=0}^{n-1}[\frac{(-\lambda)^{k}}{k!}-\left( \frac{\lambda}{\lambda-s} \right)^{n}\frac{(s-\lambda)^{k}}{k!}]\phi_{S}^{(k)}(\lambda)\vspace{2mm} \\
               +\theta\left\{\sum_{k=0}^{n-1}[\frac{(-\lambda)^{k}}{k!}-\left( \frac{\lambda}{\lambda-s} \right)^{n}\frac{(s-\lambda)^{k}}{k!}]g^{*(k)}(\lambda)-g^{*}(s)\left[2\sum_{i=0}^{n-1}\binom{n+i-1}{i}\left( \frac{\lambda}{2\lambda-s} \right)^{n+i}-\left( \frac{\lambda}{\lambda-s} \right)^{n}\right]\right.\vspace{2mm}\\
               -\sum_{i=0}^{n-1}\binom{n+i-1}{i}\sum_{k=0}^{n+i-1}[\frac{1}{2^{n+i}}\frac{(-2\lambda)^{k}}{k!}-\left( \frac{\lambda}{2\lambda-s} \right)^{n+i}\frac{(s-2\lambda)^{k}}{k!}]g^{*(k)}(2\lambda).
         \end{array}
     \end{displaymath}
     
 \end{theorem}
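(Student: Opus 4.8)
The plan is to mirror the proof of Theorem \ref{th2}, now with the $Erlang(n,\lambda)$ interarrival density and with the waiting-time LST $w^{*}(s)$ of Theorem \ref{th11} in place of \eqref{v1}; recall that $w^{*}(s)$ is fully determined once Theorem \ref{th11} is combined with Proposition \ref{prop1}, so it may be treated as known. Starting from $V_{n}=\min(W_{n},[W_{n}+S_{n}-A_{n}]^{+})$ and \cite[Theorem 3.2]{boxpen}, exactly as in \eqref{vb}, one has
\begin{equation*}
E(e^{-sV_{\infty}})=w^{*}(s)\bigl[1+P(S>A)-E(e^{-s(S-A)}1(S>A))\bigr],
\end{equation*}
so the entire task reduces to showing that the bracket equals the stated $r(s)$.

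The shortcut that makes this essentially computation-free is the identity $E(e^{-s[S-A]^{+}})=E(e^{-s(S-A)}1(S>A))+P(S\le A)$, which gives
\begin{equation*}
1+P(S>A)-E(e^{-s(S-A)}1(S>A))=2-E(e^{-s[S-A]^{+}}).
\end{equation*}
Since $E(e^{-s[S-A]^{+}})$ for the $E(n,\lambda)/G/1$ queue with FGM dependence has already been obtained in \eqref{wa1}, substituting it into $2-E(e^{-s[S-A]^{+}})$ produces precisely the function $r(s)$ in the statement, and the theorem follows. As consistency checks: for $n=1$, \eqref{wa1} collapses to \eqref{p1} and the formula reduces to Theorem \ref{th2}; for $\theta=0$ it reduces to the independent $E(n,\lambda)/G/1$ result of \cite{boxpen}.

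For a self-contained derivation one would instead evaluate $P(S>A)=\int_{y=0}^{\infty}\int_{x=0}^{y}f_{S,A}(y,x)\,dx\,dy$ and $E(e^{-s(S-A)}1(S>A))=\int_{y=0}^{\infty}\int_{x=0}^{y}e^{-s(y-x)}f_{S,A}(y,x)\,dx\,dy$ directly, using the bivariate density displayed just before Theorem \ref{th11}. The inner $x$-integrals are incomplete-gamma integrals of the type $\int_{0}^{y}x^{n-1}e^{-ax}\,dx=\tfrac{(n-1)!}{a^{n}}\bigl(1-e^{-ay}\sum_{k=0}^{n-1}\tfrac{(ay)^{k}}{k!}\bigr)$ and $\int_{0}^{y}x^{n+i-1}e^{-(2\lambda-s)x}\,dx$ (the binomial coefficients $\binom{n+i-1}{i}$ arising in the process), and the resulting $y$-integrals against $f_{S}(y)$ and $g(y)$ are turned into finite combinations of $\phi_{S}(s)$, $g^{*}(s)$ and the derivatives $\phi_{S}^{(k)}(\lambda)$, $g^{*(k)}(\lambda)$, $g^{*(k)}(2\lambda)$ by means of $\int_{0}^{\infty}y^{k}e^{-ay}h(y)\,dy=(-1)^{k}h^{*(k)}(a)$, the same identity used in the proof of Theorem \ref{th11}. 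Collecting the $\theta$-free and the $\theta$-proportional parts then gives $r(s)$.

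The only real difficulty is bookkeeping: tracking the double index $(i,k)$ and the binomial coefficients, separating the $e^{-\lambda x}$ contributions from the $e^{-2\lambda x}$ contributions of the FGM correction $g(y)\bigl[2\tfrac{\lambda^{n}}{(n-1)!}x^{n-1}\sum_{i=0}^{n-1}\tfrac{(\lambda x)^{i}}{i!}e^{-2\lambda x}-\tfrac{\lambda^{n}}{(n-1)!}x^{n-1}e^{-\lambda x}\bigr]$, and keeping the $\tfrac{(-\lambda)^{k}}{k!}$ terms (from the tail integrals $\int_{y}^{\infty}$) apart from the $\bigl(\tfrac{\lambda}{\lambda-s}\bigr)^{n}\tfrac{(s-\lambda)^{k}}{k!}$ terms (from $\int_{0}^{y}$). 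There is no conceptual obstacle beyond what is already handled in Theorem \ref{th11}, and the identity $r(s)=2-E(e^{-s[S-A]^{+}})$ removes even this bookkeeping, since \eqref{wa1} is then quoted verbatim.
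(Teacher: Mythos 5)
Your route is the right one, and it is in fact the only route the paper offers: Theorem \ref{th3} is stated without proof, and the template is the proof of Theorem \ref{th2}, which you reproduce faithfully --- invoke \cite[Theorem 3.2]{boxpen} to get $E(e^{-sV_\infty})=w^*(s)\bigl[1+P(S>A)-E(e^{-s(S-A)}1(S>A))\bigr]$, then evaluate the bracket. Your identity $1+P(S>A)-E(e^{-s(S-A)}1(S>A))=2-E(e^{-s[S-A]^{+}})$ is correct (it also reproduces \eqref{minn} from \eqref{p1} when $n=1$), and the direct computation you sketch --- incomplete-gamma inner integrals, Erlang tail sums, and $\int_0^\infty y^k e^{-ay}h(y)\,dy=(-1)^k h^{*(k)}(a)$ --- does land on the bracket structure $\bigl[\tfrac{(-\lambda)^k}{k!}-(\tfrac{\lambda}{\lambda-s})^n\tfrac{(s-\lambda)^k}{k!}\bigr]$ appearing in $r(s)$.

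One caution about the step you lean on to make the proof ``computation-free'': equation \eqref{wa1} as printed does \emph{not} equal $2-r(s)$, so quoting it verbatim does not ``produce precisely'' the stated $r(s)$. The discrepancies are (i) \eqref{wa1} writes the tail contribution as $\bigl(1-(\tfrac{\lambda}{\lambda-s})^{n}\bigr)\sum_k\tfrac{(s-\lambda)^k}{k!}\phi_S^{(k)}(\lambda)$, which is $s$-dependent where it should not be (that piece is $P(S\le A)$); the correct form is the $\tfrac{(-\lambda)^k}{k!}$ bracket used in $r(s)$ and in \eqref{vn}; (ii) the sign of $(\tfrac{\lambda}{\lambda-s})^n$ inside the $g^*(s)(\cdots)$ factor differs between \eqref{wa1} and $r(s)$, with $r(s)$ and \eqref{vn} agreeing with the direct calculation; (iii) the final double sum carries a factor $2$ in \eqref{wa1} and in \eqref{vn} but not in $r(s)$. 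These are typographical inconsistencies in the paper rather than errors in your argument, but it means the shortcut only closes the proof after you have reconciled them --- which your self-contained derivation does, and which in fact shows that the correct expression is $2-E(e^{-s[S-A]^{+}})$ with the bracket form of $r(s)$ and the factor $2$ retained in the last line.
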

 \section{A more general case}\label{sec3}
 Consider now the case where $S_{n}=\Omega_{n}A_{n}+J_{n}$, where $\Omega_{n}$ is a discrete r.v. independent of any other r.v. in the system that takes values in $E=\{a_{1},\ldots,a_{N}\}$ with $a_{i}<1$, $i=1,\ldots,N$. Moreover, assume that $P(\Omega_{n}=a_{i})=p_{i}$, $i=1,\ldots,N$ with $\sum_{i=1}^{N}p_{i}=1$. On top of that, we consider the case where $S_{n}$, $J_{n}$ are also dependent based on FGM, with $J_{n}\sim exp(\lambda)$. Then, the joint p.d.f. of $\{(S_{n},J_{n})\}_{n\in\mathbb{N}}$ is given by \eqref{biv1}. Then, for $\bar{a}_{i}=1-a_{i}$, it is readily seen that
 \begin{equation}
     E(e^{-s[S_{n}-A_{n}]^{+}})=E(e^{-s[(1-\Omega_{n})S_{n}-J_{n}]^{+}})=\sum_{i=1}^{N}p_{i} E(e^{-s[\bar{a}_{i}S_{n}-J_{n}]^{+}}),
 \label{c1}
 \end{equation}
 where
 \begin{displaymath}
 \begin{array}{rl}
     E(e^{-s[\bar{a}_{i}S_{n}-J_{n}]^{+}})=&\int_{y=0}^{\infty}\int_{x=0}^{\bar{a}_{i}y}e^{-s(\bar{a}_{i}y-x)}f_{S,J}(y,x)dxdy +\int_{y=0}^{\infty}\int_{x=\bar{a}_{i}y}^{\infty}f_{S,J}(y,x)dxdy \vspace{2mm}\\
         =&\frac{\lambda}{\lambda-s}\phi_{S}(\bar{a}_{i}s)-\frac{s}{\lambda-s}\phi_{S}(\bar{a}_{i}\lambda)+s\theta\left[ \frac{g^{*}(\bar{a}_{i}\lambda)}{\lambda-s}-\frac{g^{*}(2\bar{a}_{i}\lambda)}{2\lambda-s}-\frac{\lambda}{(2\lambda-s)(\lambda-s)}g^{*}(\bar{a}_{i}s)\right].
     \end{array}
 \end{displaymath}
 We now turn our attention to the derivation of $w^{*}(s)=E(e^{-sW_{\infty}})$ for this model. Note that,
 \begin{displaymath}
    E(e^{-sW_{n+1}})=E(e^{-s[W_{n}+(1-\Omega_{n})S_{n}-J_{n}]^{+}})=\sum_{i=1}^{N}p_{i} E(e^{-s[W_{n}+\bar{a}_{i}S_{n}-J_{n}]^{+}})
 \end{displaymath}
Then, for $i=1,\ldots,N$,
\begin{displaymath}
    \begin{array}{rl}
       E(e^{-s[W+\bar{a}_{i}S-J]^{+}})=&E\left(\int_{y=0}^{\infty}\int_{x=0}^{W+\bar{a}_{i}y}e^{-s(W+\bar{a}_{i}y-x)}f_{S,J}(y,x)dxdy\right.\\
         &+\left. \int_{y=0}^{\infty}\int_{x=W+\bar{a}_{i}y}^{\infty}f_{S,J}(y,x)dxdy\right)\vspace{2mm}\\
         =&E(e^{-sW})[\frac{\lambda}{\lambda-s}\phi_{S}(\bar{a}_{i}s)-\theta\frac{s\lambda}{(\lambda-s)(2\lambda-s)}g^{*}(\bar{a}_{i}s)]\vspace{2mm}\\
   & +s[\frac{\theta g^{*}(\bar{a}_{i}\lambda)-\phi_{S}(\bar{a}_{i}\lambda)}{\lambda-s}w^{*}(\lambda)-\frac{\theta g^{*}(2\bar{a}_{i}\lambda)}{2\lambda-s}w^{*}(2\lambda)].
    \end{array}
\end{displaymath}
Simple calculations imply that
\begin{equation}
    \begin{array}{l}
         w^{*}(s)[1-\sum_{i=1}^{N}p_{i}\left( \frac{\lambda}{\lambda-s}\phi_{S}(\bar{a}_{i}s)-\theta\frac{s\lambda}{(\lambda-s)(2\lambda-s)}g^{*}(\bar{a}_{i}s)\right)] \vspace{2mm}\\
        = s\sum_{i=1}^{N}p_{i}\left( \frac{\theta g^{*}(\bar{a}_{i}\lambda)-\phi_{S}(\bar{a}_{i}\lambda)}{\lambda-s}w^{*}(\lambda)-\frac{\theta g^{*}(2\bar{a}_{i}\lambda)}{2\lambda-s}w^{*}(2\lambda)\right).
    \end{array}\label{fgq}
\end{equation}
Note that as in Theorem \ref{th1}, we have to obtain first $w^{*}(\lambda)$, $w^{*}(2\lambda)$. Following similar arguments as in Theorem \ref{rouc}, we can show that the term in brackets in the left-hand side of \eqref{fgq} has two roots, namely $\tau_{1}$, such that $Re(\tau_{1})>0$, and $\tau_{2}=0$. Thus, we can use the fact that $w^{*}(0)=1$, and that $s=\tau_{1}$ should also vanish the right-hand side of \eqref{fgq} to obtain two linear equations to obtain $w^{*}(\lambda)$, $w^{*}(2\lambda)$. Thus, $w^{*}(s)$ is considered known. 

Now, by multiplying $w^{*}(s)$ derived above with \eqref{c1} we obtain the LST of the distribution of the maximum overlap time for the model where $S_{n}=\Omega_{n}A_{n}+J_{n}$, where $\Omega_{n}$ such that $P(\Omega_{n}=a_{i})=p_{i}$, $i=1,\ldots,N$, with $a_{i}<1$, $i=1,\ldots,N$, and $\{(S_{n},J_{n})\}_{n\in\mathbb{N}}$ are dependent based on FGM copula.

Using similar arguments as in Theorem \ref{th2}, the LST of the steady-state minimum overlap time $V_{\infty}$ for the model at hand equals $E(e^{-s V_{\infty}})=w^{*}(s)r(s)$, where $w^{*}(s)$ as derived in \eqref{fgq} and now
\begin{displaymath}
\begin{array}{rl}
    r(s)=&2+\sum_{i=1}^{N}p_{i}\left[\phi_{S}(\bar{a}_{i}\lambda)\frac{s}{\lambda-s}-\frac{\lambda}{\lambda-s}\phi_{S}(\bar{a}_{i}s)+\theta s(\frac{\lambda}{(2\lambda-s)(\lambda-s)}g^{*}(\bar{a}_{i}s)+\frac{g^{*}(2\bar{a}_{i}\lambda)}{2\lambda-s}-\frac{g^{*}(\bar{a}_{i}\lambda)}{\lambda-s})\right].\end{array}
\end{displaymath}
\section{Numerical results}\label{sec4}
We now illustrate the theoretical findings. In particular, we illustrate the effect of dependence among interarrival and service times based on the FGM copula on the mean maximum and mean minimum overlap time, by considering the simple case of an $M/M/1$ queue under the FGM copula based on the results derived in Theorems \ref{th1}, \ref{th2}.

In Figure \ref{fig1}, we illustrate the effect of parameter $\theta$, i.e., the dependence based on the FGM copula on the mean duration of the maximum and the minimum overlap time, when we assume exponentially distributed interarrival and service times. We observe that when $\theta$ increases, the maximum and the minimum overlap time also increases. This is because when $\theta$ is positive (resp. negative), the probability of having a long service time increases as the time elapsed since the last arrival increases (resp. decreases). Moreover, by increasing $\rho$, the mean maximum and minimum overlap time significantly increases, as expected. Note that in the case of independence of $S$, $A$ (i.e., $\theta=0$, the case in \cite{boxpen}), the mean maximum and minimum overlap time are given at the point where the lines in Figure \ref{fig1} cross the vertical axis. 
\begin{figure}[!ht]
    \centering
    \includegraphics[width=1\linewidth]{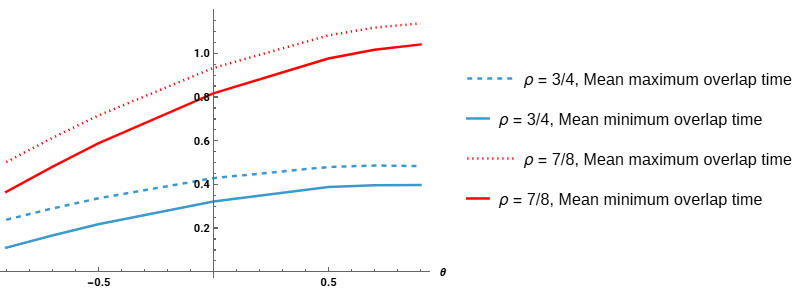}
    \caption{Mean maximum/minimum overlap time versus $\theta$}
    \label{fig1}
\end{figure}
\section*{Acknowledgements} The author gratefully acknowledges the Empirikion Foundation, Athens, Greece (\href{https://www.empirikion.gr/}{www.empirikion.gr}) for the financial support of this work. This work is dedicated to the memory of Antonis Kontis. 
\bibliographystyle{abbrv} 
 \bibliography{mybibitem}

\end{document}